\documentclass[11pt]{amsart}
\usepackage[T1]{fontenc}
\usepackage{lmodern}
\usepackage{amsmath,amssymb,amsthm,mathtools}
\usepackage{microtype}
\usepackage[colorlinks=true,linkcolor=blue,citecolor=blue,urlcolor=blue]{hyperref}
\usepackage[margin=1.1in]{geometry}

\newtheorem{theorem}{Theorem}[section]
\newtheorem{proposition}[theorem]{Proposition}
\newtheorem{lemma}[theorem]{Lemma}
\newtheorem{corollary}[theorem]{Corollary}
\newtheorem{remark}[theorem]{Remark}

\newcommand{\HH}{\mathbb H}
\newcommand{\CC}{\mathbb C}
\newcommand{\RR}{\mathbb R}
\newcommand{\EE}{\mathbf E}
\newcommand{\PP}{\mathbf P}
\newcommand{\F}{\mathcal F}

\newcommand{\Cont}{\operatorname{Cont}_{d}}
\newcommand{\Occ}{\mu_{\mathrm{O}}}
\newcommand{\Haus}{\mu_{\mathrm{H}}}
\newcommand{\dist}{\operatorname{dist}}
\newcommand{\BGE}{\begin{equation}}
\newcommand{\BGEN}{\begin{equation*}}
\newcommand{\EDE}{\end{equation}}
\newcommand{\EDEN}{\end{equation*}}

\def\til{\widetilde}

\def\sem{\setminus}
\def\lin{\overline}

\title[Hausdorff Measure as Natural Length]{An Exact Hausdorff Measure as Natural Length for SLE}
\author{Dapeng Zhan}
\date{\today}
\address{Department of Mathematics, Michigan State University,
East Lansing, MI 48824, USA}
\begin{document}

\begin{abstract}
Fix $0<\kappa<8$ and put $d=1+\kappa/8$.  Recent work by OpenAI identifies an explicit exact Hausdorff gauge
\(
 h_\kappa(r)=r^d\bigl(\log\log(1/r)\bigr)^{(2-d)/2}
\) for sufficiently small $r$
for chordal $\mathrm{SLE}_\kappa$: the corresponding Hausdorff measure is almost surely positive and finite on every segment $\gamma([s,t])$ with $0<s<t<\infty$, and has finite expectation in bounded subsets of the plane.  We show that this Hausdorff measure is a deterministic positive constant multiple of the occupation measure (natural length) constructed by Lawler and Rezaei. %, and hence that it induces natural parametrization up to a deterministic change of scale.
\end{abstract}

\maketitle

\section{Introduction}
Let $0<\kappa<8$.  Let $\gamma$ be chordal $\mathrm{SLE}_\kappa$ in the upper half-plane  $\HH$ from $0$ to $\infty$ \cite{RS}, parametrized by half-plane capacity $2t$, and let $d=1+\frac{\kappa}{8}$ be its Hausdorff dimension \cite{Beffara}.  Write $\Gamma_t=\gamma([0,t])$ and $\Gamma=\gamma([0,\infty))$.

The recent exact-gauge theorem \cite{OAIexact} states that the gauge $h_\kappa$, specified for sufficiently small $r>0$ by
\BGE
 h_\kappa(r)=r^d\bigl(\log\log(1/r)\bigr)^{(2-d)/2}, \label{gauge}
\EDE
gives a positive finite Hausdorff measure on every nontrivial positive-time compact segment of the SLE trace.  It also gives finite expected $\mathcal H^{h_\kappa}$-measure of the whole trace in every bounded disk.  The purpose of this note is to identify the resulting measure.

For a gauge function $h$ and a Borel set $E\subset\CC$, let
$\mathcal H^h_E$ denote the Hausdorff measure with gauge $h$
on $E$, extended by zero to $\CC$.
These measures are consistent under restriction:
if $E\subset F$ are Borel sets, then
$\mathcal H^h_E=\mathcal H^h_F(\cdot\cap E)$.

Lawler and Rezaei \cite{LRMinkowski} proved that the $d$-dimensional Minkowski content of $\mathrm{SLE}_\kappa$ exists for every $0<\kappa<8$ and is a deterministic normalization of the natural parametrization constructed in \cite{LSnatural,LZnatural}.  Their normalization will be used below.  Let $\Occ$ denote their occupation measure on the SLE curve $\Gamma$, constructed in \cite[Theorem 3.1 and Proposition 3.3]{LRMinkowski} and extended by zero to $\CC$.  It is the pushforward of Lebesgue measure on $\RR_+$ under the naturally parametrized curve.  Its relation to Minkowski content on curve segments and spatial restrictions is recorded in Section~2.

Our main result is the following.

\begin{theorem} \label{thm:main}
For each fixed $0<\kappa<8$ there is a deterministic constant $C_\kappa\in(0,\infty)$ such that, almost surely,
 $\mathcal H^{h_\kappa}_\Gamma=C_\kappa\,\Occ$. In particular, almost surely, simultaneously for all $0\le s<t<\infty$, $\mathcal H^{h_\kappa}_\Gamma(\gamma([s,t]))  = C_\kappa\,\Cont(\gamma([s,t]))$.
\end{theorem}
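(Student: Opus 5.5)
The plan is to show that $\mathcal H^{h_\kappa}_\Gamma$ is absolutely continuous with respect to $\Occ$ with a density that is almost surely constant. The constancy will come from a combination of domain Markov property, conformal covariance and a zero--one law. The order of steps is as follows.

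First I establish mutual absolute continuity and a density. From \cite{OAIexact}, $\mathcal H^{h_\kappa}_\Gamma$ is a.s. locally finite and gives positive mass to every segment $\gamma([s,t])$. The same holds for $\Occ$, which is the pushforward of Lebesgue measure. Both measures are supported on $\Gamma$ and give zero mass to points. I would show that each is determined, up to local comparability, by its values on curve segments. The standard density-of-measures (Rogers--Taylor type) argument for Hausdorff measures gives two-sided bounds: $\mathcal H^{h}(E)\asymp \mathcal H^h(\Gamma)$-type comparisons as soon as one has a.s. upper and lower bounds on $\limsup_{r\to0}\Occ(B(z,r))/h_\kappa(r)$ for $\Occ$-a.e. $z$. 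Concretely, I would prove that for $\Occ$-a.e. $z$,
\[ \limsup_{r\to 0}\frac{\Occ(B(z,r))}{h_\kappa(r)} = c_* \]
for a deterministic $c_*\in(0,\infty)$. Then the Rogers--Taylor density theorem yields $\mathcal H^{h_\kappa}_\Gamma = c_*^{-1}\Occ$ on $\Gamma$, with the constant possibly adjusted by the usual normalization. This gives the theorem with $C_\kappa=1/c_*$. The second assertion then follows from the stated relation between $\Occ(\gamma([s,t]))$ and $\Cont(\gamma([s,t]))$ recorded in Section~2, simultaneously for all $s<t$ by monotonicity and continuity.

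The key step, and the main obstacle, is the a.s. deterministic law of the iterated logarithm for $\Occ(B(z,r))$ at $\Occ$-typical points. I would work under the two-sided radial (Palm) measure: weighting by $\Occ$ amounts to rooting at a typical point, where the local picture is two-sided whole-plane SLE$_\kappa$ through $0$. For this process, consider $X_k=\Occ(B(0,e^{-k}))e^{kd}$. Scale invariance of two-sided whole-plane SLE makes the increments stationary in $k$. Exponential decorrelation across scales (from the domain Markov property and mixing of the radial Bessel-type driving process) makes $\log X_k$ behave like a stationary mixing sequence. The factor $(\log\log)^{(2-d)/2}$ should arise from an exponential-type upper tail $\PP(X\ge x)\approx\exp(-c x^{2/(2-d)})$ for the mass in a unit-scale ball. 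Given such a tail, a Borel--Cantelli argument along the scales $e^{-k}$, combined with near-independence across well-separated scales, yields $\limsup X_k/(\log k)^{(2-d)/2}=c_*$. The value is deterministic by a zero--one law, since the limsup is a tail event of the mixing sequence.

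If sharp tail asymptotics are out of reach, I would fall back on a softer route. From \cite{OAIexact} I would only extract that the limsup lies in $[a,b]\subset(0,\infty)$ a.s., which suffices for mutual absolute continuity with density $f\in[1/b,1/a]$. I would then show that $f$ is a.s. constant. The density at $z$ is a measurable function of the germ of $\Gamma$ near $z$. Under the Palm measure this germ is the scale-invariant two-sided SLE, whose tail $\sigma$-field at small scales is trivial. Hence $f(z)$ equals a deterministic constant for $\Occ$-a.e. $z$. Transferring from the Palm measure to the original curve uses the Radon--Nikodym relation between chordal SLE near an interior point and two-sided radial SLE, which holds up to absolutely continuous changes at macroscopic scales that do not affect germs.
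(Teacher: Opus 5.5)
There is a genuine gap. The steps that carry the weight of your argument are pointwise density statements. They are not available from the inputs, and as stated they would not suffice.

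First, the Rogers--Taylor theorem with densities over centered balls does not give equality for $\mathcal H^{h_\kappa}$. If $\limsup_{r\to0}\Occ(B(z,r))/h_\kappa(r)=c_*$ at every point of a Borel set $A$, it yields only $c_*^{-1}\Occ(A)\le\mathcal H^{h_\kappa}_\Gamma(A)\le 2^d c_*^{-1}\Occ(A)$; the factor $2^d$ comes from covering by balls of diameter $2r$. Exactness with centered balls holds for a spherical-type covering measure, not for $\mathcal H^{h_\kappa}$. So even a sharp deterministic law of the iterated logarithm would leave you needing your fallback constancy argument.

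Second, and more seriously, density bounds at $\Occ$-a.e.\ point say nothing about $\Occ$-null sets. They cannot exclude a part of $\mathcal H^{h_\kappa}_\Gamma$ that is singular to $\Occ$ and carried by exceptional points (double points, points where the curve is locally unusually thin, and so on). The Rogers--Taylor bound $\mathcal H^{h_\kappa}(A)\lesssim\Occ(A)$ requires the lower density bound at \emph{every} point of $A$. You would need such a bound simultaneously at all points of $\Gamma$ outside an $\mathcal H^{h_\kappa}$-null set. That uniform statement is much stronger than Theorem~\ref{thm:exact-input}, which gives only positivity and finiteness on segments and finite local expectation. In the paper, even $\mathcal H^{h_\kappa}_\Gamma(\mathrm{Doub})=0$ is Corollary~\ref{cor:double}, a consequence of the theorem rather than an input.

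Finally, three ingredients are asserted rather than proved: the sharp tail asymptotics, the zero--one law for germs of two-sided SLE under the Palm measure of $\Occ$, and the Palm transfer. Together they amount to a refinement of the exact-gauge theorem itself.

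The idea you are missing is that no local analysis is needed. It suffices to identify first moments and then use uniqueness of the Doob--Meyer decomposition. The argument runs as follows.
\begin{itemize}
\item The domain Markov property and the conformal covariance of $\mathcal H^{h}$ for regularly varying $h$ (Lemma~\ref{lem:conformal-covariance}) show that $\nu=\EE[\mathcal H^{h_\kappa}_\Gamma(\cdot)]$ satisfies $\mathcal A^*\nu=0$, where $\mathcal A$ is the $|F'|^d$-weighted generator of the reverse Loewner flow.
\item H\"ormander's theorem makes $\nu$ smooth. Scaling and reflection symmetry then reduce the equation to an ODE whose even solutions are multiples of the Green function profile, so $\nu=C_\kappa G\,dA$, with $C_\kappa>0$ by segment positivity.
\item Conditioning on $\F_t$ and applying the same covariance to the future curve shows that $\Theta^{\rm H}_t(D)+C_\kappa\Psi_t(D)$ is a martingale. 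Lawler--Rezaei show that $\Theta^{\rm O}_t(D)+\Psi_t(D)$ is one too.
\item Hence $C_\kappa^{-1}\Theta^{\rm H}_t(D)-\Theta^{\rm O}_t(D)$ is a continuous finite-variation martingale started at $0$, so it vanishes identically.
\end{itemize}
This identifies the two measures on a countable determining class of rectangles, with no absolute-continuity, density or zero--one argument required.
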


\section{Inputs and notation}
We use the following consequence of \cite[Theorem 1.1]{OAIexact}.

\begin{theorem}\label{thm:exact-input}
For fixed $0<\kappa<8$, almost surely,
\begin{equation}\label{eq:positive-finite-segments}
 0<\mathcal H^{h_\kappa}_\Gamma(\gamma([s,t]))<\infty,
 \qquad 0<s<t<\infty.
\end{equation}
Moreover,  
\begin{equation}\label{eq:local-expectation}
 \EE\bigl[\mathcal H^{h_\kappa}_\Gamma(B(0,R))\bigr]<\infty,\quad\text{for every }R\in(0,\infty).
\end{equation}
\end{theorem}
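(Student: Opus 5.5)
The plan is to read off both assertions from \cite[Theorem 1.1]{OAIexact}, using the restriction consistency of $\mathcal H^h_E$ from the introduction. No new probabilistic estimate should be needed; the work is in matching the formulation of \cite{OAIexact} to the present one.

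First I will check that the objects make sense.
\begin{itemize}
\item The gauge $h_\kappa$ is only specified for small $r$. The Hausdorff measure $\mathcal H^{h}$ is the limit as $\delta\to0$ of $\delta$-covers, so it depends only on the germ of $h$ at $0$. Any extension of $h_\kappa$ to all $r>0$ therefore gives the same measure.
\item $\Gamma$ is a Borel set. Since $\kappa<8$, the trace is continuous and transient, so $\Gamma$ is closed in $\overline{\HH}$. Each $\gamma([s,t])$ is compact, and $\Gamma\cap B(0,R)$ is Borel.
\item Restriction consistency applies. Since $\gamma([s,t])\subset\Gamma$, we have $\mathcal H^{h_\kappa}_\Gamma(\gamma([s,t]))=\mathcal H^{h_\kappa}(\gamma([s,t]))$. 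Likewise $\mathcal H^{h_\kappa}_\Gamma(B(0,R))=\mathcal H^{h_\kappa}(\Gamma\cap B(0,R))$.
\end{itemize}
So \eqref{eq:positive-finite-segments} and \eqref{eq:local-expectation} are statements about the intrinsic $h_\kappa$-Hausdorff measure of subsets of the trace, which is the form in which \cite{OAIexact} states its results.

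For \eqref{eq:positive-finite-segments}, the cited theorem gives positivity and finiteness for each fixed segment, or for segments with a.s. quantifier already uniform. If it is only for fixed $s<t$, I will upgrade it using monotonicity. Intersect the a.s. events over the countable family of rational pairs $0<s<t$. Then for arbitrary $0<s<t$, choose rationals $s\le s'<t'\le t$ and $s''<s$, $t''>t$ with $s''>0$. This gives
\[
0<\mathcal H^{h_\kappa}(\gamma([s',t']))\le \mathcal H^{h_\kappa}(\gamma([s,t]))\le \mathcal H^{h_\kappa}(\gamma([s'',t'']))<\infty.
\]

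For \eqref{eq:local-expectation}, the cited statement gives finite expected measure of the whole trace in bounded disks. Since $B(0,R)$ is contained in a disk of the form used there, monotonicity finishes the argument. If \cite{OAIexact} instead phrases it for a particular fixed disk, say the unit disk, I will use SLE scale invariance. The law of $r\gamma$ equals that of a time change of $\gamma$, so
\[
\mathcal H^{h_\kappa}(\Gamma\cap B(0,R))\overset{d}{=}\mathcal H^{h_\kappa}\bigl(R\,(\Gamma\cap B(0,1))\bigr).
\]
The gauge is not exactly homogeneous. However, $h_\kappa(Rr)/h_\kappa(r)\to R^d$ as $r\to0$, so scaling changes the measure by at most a bounded factor, which preserves finiteness of the expectation.

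The only potential obstacle is this bookkeeping between formulations: whether \cite{OAIexact} states the result simultaneously over all segments, and in which disks. Both gaps are closed by the countable-union and scaling arguments above.
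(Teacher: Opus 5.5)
Your proposal is correct and takes the same route as the paper, which records this statement as a direct consequence of \cite[Theorem 1.1]{OAIexact} with no further argument; your countable-union and monotonicity bookkeeping is a sound way to handle any mismatch in formulation. In your scaling fallback you can say more than ``bounded factor'': since $h_\kappa(ar)/h_\kappa(r)\to a^d$ as $r\downarrow0$, the measure scales exactly, $\mathcal H^{h_\kappa}(aE)=a^d\mathcal H^{h_\kappa}(E)$, as the paper notes in \eqref{eq:exact-scaling-H}.
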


Let $G$ denote the chordal SLE Green function with the normalization of Lawler and Rezaei \cite{LRMinkowski}:
\begin{equation}\label{eq:green}
 G(z)=c_\kappa^{\rm gr}\,[\Im z]^{d-2}[\sin\arg z]^{8/\kappa-1},
\end{equation}
where the positive constant $c_\kappa^{\rm gr}$ is fixed by their one-point Minkowski-content limit.   If $D$ is simply connected with marked prime ends $a,b$ and $F:D\to\HH$ sends $a$ to $0$ and $b$ to $\infty$, then
\BGE
 G_D(z;a,b)=|F'(z)|^{2-d}G(F(z)). \label{GD}
\EDE

Let $H_t$ be the unbounded component of $\HH\setminus\Gamma_t$, let $g_t:H_t\to\HH$ be hydrodynamically normalized, and let $U_t=\sqrt\kappa B_t$ be its chordal driving function.  Throughout, $(\F_t)$ is the usual augmentation of the natural filtration of $B$.  In $G_{H_t}(z;\gamma(t),\infty)$, the marking $\gamma(t)$ denotes the current tip prime end mapped by $g_t$ to $U_t$.  Let $A$ denote the planar Lebesgue measure. Put
\BGE
 \Psi_t(D)=\int_{D\cap H_t}G_{H_t}(z;\gamma(t),\infty)\,A(dz). \label{Psi}
\EDE
Put $\Theta_t^{\rm O}=\Occ(\Gamma_t)$.  By \cite[Theorem 1.1 and Section 3.2, equation (30)]{LRMinkowski}, $\Theta^{\rm O}_t$ is continuous and strictly increasing, and almost surely, simultaneously for all $0\le s<t<\infty$,
\begin{equation}\label{eq:LR-segment}
 \Theta_t^{\rm O}-\Theta_s^{\rm O}
 =\Occ(\Gamma_t\setminus\Gamma_s)
 =\Occ(\gamma([s,t]))=\Cont(\gamma([s,t])).
\end{equation}
This includes boundary visits; see also \cite[Lemma 3.7]{LRMinkowski}.  In capacity time the occupation measure is $\gamma_*(d\Theta^{\rm O})$, where $d\Theta^{\rm O}$ is the Lebesgue--Stieltjes measure of the natural clock.  Equation \eqref{eq:LR-segment} gives zero occupation mass to overlaps of separated time intervals.  Consequently, for every Borel set $D$,
$\Theta_t^{\rm O}(D):=\Occ(D\cap\Gamma_t)=\int_0^t\mathbf1_D(\gamma(s))\,d\Theta_s^{\rm O}$.
The integral representation shows that $\Theta^{\rm O}(D)$ is adapted; it is continuous because its increments are
bounded by those of $\Theta^{\rm O}$.
By uniqueness of measures, the first-moment identity in
\cite[Theorem 3.1]{LRMinkowski} extends from dyadic squares
to all Borel sets, giving
$\EE[\Occ(D)]=\int_{D\cap\HH}G(z)\,A(dz)$.

We next identify the occupation mass of spatial restrictions with their Minkowski content.  Fix a bounded open set $D\Subset\HH$ with piecewise smooth boundary.  The first-moment identity gives $\Occ(\partial D)=0$ almost surely.  On this event, fix $T>0$ and partition $[0,T]$ into finitely many intervals $I_j=[t_{j-1},t_j]$.  For $E=D$ or $D^c$, the segments $\gamma(I_j)$ meeting $E$ cover $\Gamma_T\cap E$.  Finite subadditivity of upper Minkowski content and \eqref{eq:LR-segment} therefore give
\[
 \overline{\operatorname{Cont}}_d(\Gamma_T\cap E)
 \le\sum_{j:\,\gamma(I_j)\cap E\ne\varnothing}
       (\Theta_{t_j}^{\rm O}-\Theta_{t_{j-1}}^{\rm O}).
\]
Along any sequence of partitions whose mesh tends to zero, continuity of $\gamma$ and dominated convergence with respect to $d\Theta^{\rm O}$ show that the sum tends to $\Occ(\Gamma_T\cap E)$; the only possible discrepancy occurs at times with $\gamma(s)\in\partial D$, which have zero $d\Theta^{\rm O}$-measure.
Since $\Cont(\Gamma_T)=\Occ(\Gamma_T)$, the decomposition of $\Gamma_T$ into its parts in $D$ and $D^c$ also gives
$\underline{\operatorname{Cont}}_d(\Gamma_T\cap D)\ge
\Cont(\Gamma_T)-\overline{\operatorname{Cont}}_d(\Gamma_T\cap D^c)
\ge\Occ(\Gamma_T\cap D)$.
Thus $\Cont(\Gamma_T\cap D)=\Occ(\Gamma_T\cap D)$, simultaneously for all $T$.
Transience implies that all visits to the bounded set $D$ occur before a finite time, so also $\Cont(\Gamma\cap D)=\Occ(D)$.
The conditional-expectation identity in \cite[Theorem 1.1]{LRMinkowski} now yields
\begin{equation}\label{eq:LR-doob-meyer}
 \Theta_t^{\rm O}(D)+\Psi_t(D) \quad\text{is an }\mathcal F\text{-martingale.}
\end{equation}

\section{Regular variation and conformal covariance}

We use the conformal change-of-variables formula of \cite[Proposition 2]{Chistyakov2003}.

\begin{lemma} \label{lem:conformal-covariance}
Let $h$ be a gauge such that for some $q>0$,
\begin{equation}\label{eq:regular-variation}
 \lim_{r\downarrow0}\frac{h(ar)}{h(r)}=a^q
 \qquad\text{for every }a>0.
\end{equation}
Let $F:U\to V$ be conformal.  Then for every Borel set $E\subset U$ and nonnegative Borel function $f$ on $V$,
\begin{equation}\label{eq:area-formula}
 \int_{F(E)} f(w) \mathcal H^h_{F(E)} (dw)
 =\int_E f(F(z)) |F'(z)|^q\, \mathcal H^h_E(dz).
\end{equation}
% where $\mathcal H^h_E$ and $ \mathcal H^h_{F(E)}$ are  the Hausdorff measures with gauge $h$ on $E$ and $F(E)$, respectively.
\end{lemma}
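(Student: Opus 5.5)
The plan is to prove \eqref{eq:area-formula} locally and then assemble it by countable additivity. Here $\mathcal H^h_{F(E)}$ is the pushforward-type object we must compare with $|F'|^q\,\mathcal H^h_E$. Both sides of \eqref{eq:area-formula} are integrals of nonnegative Borel functions against Borel measures. It therefore suffices to prove the measure identity
\[
\mathcal H^h(F(B)) = \int_B |F'(z)|^q\,\mathcal H^h(dz)
\]
for Borel $B\subset E$, and then pass to general $f$ by simple-function approximation and monotone convergence. Since $F$ is a homeomorphism onto its image, $F(B)$ is Borel. By consistency under restriction we may work throughout with the full Hausdorff measure $\mathcal H^h$ on $\CC$.

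The key local estimate is as follows. Fix $z_0\in U$ and $\epsilon>0$. By continuity of $F'$ and $F'(z_0)\neq0$, there is a disk $B(z_0,\rho)\Subset U$ on which $F$ is bi-Lipschitz with constants close to the local scaling, namely
\[
(1-\epsilon)|F'(z_0)|\,|z-w|\le|F(z)-F(w)|\le(1+\epsilon)|F'(z_0)|\,|z-w|.
\]
This comes from $|F(z)-F(w)-F'(z_0)(z-w)|\le \sup_{B(z_0,\rho)}|F'-F'(z_0)|\,|z-w|$ on the convex disk. For a covering of $B\cap B(z_0,\rho)$ by sets $S_j$ of diameter $r_j<\delta$, the images satisfy $\operatorname{diam}F(S_j)\le(1+\epsilon)|F'(z_0)|r_j$. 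By \eqref{eq:regular-variation} (which is uniform for $a$ in compact subsets of $(0,\infty)$ once we use the monotonicity of $h$ to sandwich), for $\delta$ small we get
\[
h\bigl((1+\epsilon)|F'(z_0)|r\bigr)\le(1+\epsilon)^{q+1}|F'(z_0)|^q h(r)
\]
for all $r<\delta$. Taking infima and letting $\delta\to0$ gives $\mathcal H^h(F(B'))\le(1+\epsilon)^{2q+1}|F'(z_0)|^q\mathcal H^h(B')$ for Borel $B'\subset B(z_0,\rho)$. Applying the same argument to $F^{-1}$ gives the reverse inequality with $(1-\epsilon)$ factors.

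To globalize, partition $U$ into countably many small Borel pieces $Q_k$, for example dyadic squares intersected with a compact exhaustion. Each piece is small enough that the above holds around a point $z_k$, and on it the oscillation of $|F'|^q$ is at most a factor $(1\pm\epsilon)$. Summing, using countable additivity of $\mathcal H^h$ restricted to Borel sets and injectivity of $F$ (so the $F(B\cap Q_k)$ are disjoint), we obtain
\[
(1-C\epsilon)\int_B|F'|^q\,d\mathcal H^h\le\mathcal H^h(F(B))\le(1+C\epsilon)\int_B|F'|^q\,d\mathcal H^h.
\]
This holds also when the integral is infinite. Let $\epsilon\to0$.

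The main obstacle is the uniformity of the regular variation in the scaling factor $a$. We need $h(ar)/h(r)\to a^q$ uniformly for $a$ in a neighborhood of $|F'(z_0)|$, not merely pointwise. This follows from monotonicity of the gauge: for $a\in[a_0(1-\epsilon),a_0(1+\epsilon)]$ we have
\[
\frac{h(a_0(1-\epsilon)r)}{h(r)}\le\frac{h(ar)}{h(r)}\le\frac{h(a_0(1+\epsilon)r)}{h(r)},
\]
and the pointwise limits at the two endpoints are within a factor $(1\pm\epsilon)^q$ of $a_0^q$. Alternatively, one can simply invoke \cite[Proposition 2]{Chistyakov2003}, which is precisely this change-of-variables statement for regularly varying gauges. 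The remaining issue, pieces near $\partial U$ where $|F'|$ degenerates, causes no trouble because the partition is countable and each piece is compactly inside $U$.
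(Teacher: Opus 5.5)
Your proof is correct, but it does not follow the paper's route. The paper gives no argument of its own. It invokes \cite[Proposition 2]{Chistyakov2003} and adds only the remark that Chistyakov's proof, stated for integrable $f$, works verbatim for nonnegative measurable $f$ with possibly infinite values.

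You instead prove the change of variables from scratch, in four steps:
\begin{itemize}
\item Linearize $F$ on a small disk to get bi-Lipschitz bounds with constants $(1\pm\epsilon)|F'(z_0)|$.
\item Apply pointwise regular variation at the two fixed ratios $(1+\epsilon)|F'(z_0)|$ and $\bigl((1-\epsilon)|F'(z_0)|\bigr)^{-1}$, pushing covers forward by $F$ and back by $F^{-1}$. This gives two-sided bounds $\mathcal H^h(F(B'))\approx|F'(z_0)|^q\,\mathcal H^h(B')$.
\item Globalize using a countable Borel partition into pieces compactly inside $U$, together with injectivity of $F$ and countable additivity of $\mathcal H^h$ on Borel sets.
\item Pass from the set identity to general $f$ by monotone convergence.
\end{itemize}

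What your route buys is self-containedness. It also gives the extension to $[0,\infty]$-valued integrands for free, since you prove the set identity first; the paper has to assert that extension separately. What the citation buys is brevity.

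Two minor remarks:
\begin{itemize}
\item After letting $\delta\to0$ the constant is $(1+\epsilon)^{q+1}$, not $(1+\epsilon)^{2q+1}$. This is harmless.
\item The uniformity of regular variation in $a$, which you flag as the main obstacle, is not actually needed. On each piece only two fixed values of $a$ occur, and the limit $\delta\to0$ is taken piece by piece before summing. Your sandwich argument for uniformity is nonetheless correct.
\end{itemize}
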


Although stated for integrable functions,
the proof of \cite[Proposition 2]{Chistyakov2003} also applies verbatim to nonnegative measurable functions,
allowing infinite values.

The gauge $h_\kappa$ in \eqref{gauge} satisfies Lemma \ref{lem:conformal-covariance} with $q=d$.  In particular, for every Borel set $E$,
\begin{equation}\label{eq:exact-scaling-H}
 \mathcal H^{h_\kappa}_{a\Gamma}(aE)=a^d\mathcal H^{h_\kappa}_\Gamma(E),\qquad a>0.
\end{equation}
\begin{lemma}  $\mathcal H^{h_\kappa}_\Gamma(\RR)=0$. \label{lem:real-zero}
\end{lemma}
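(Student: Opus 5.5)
I will prove Lemma~\ref{lem:real-zero} by comparing the gauge $h_\kappa$ with a pure power gauge of exponent strictly smaller than $d$, and then showing that $\Gamma\cap\RR$ has Hausdorff dimension strictly smaller than that exponent. Since $\mathcal H^{h_\kappa}_\Gamma(\RR)=\mathcal H^{h_\kappa}(\Gamma\cap\RR)$, it suffices to treat the set $\Gamma\cap\RR$.

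\emph{Step 1: reduction to a power gauge.} Fix $s<d$. Because $(\log\log(1/r))^{(2-d)/2}=o(r^{-(d-s)})$ as $r\downarrow0$, we have $h_\kappa(r)\le r^s$ for all sufficiently small $r$. Comparing covers of mesh $\delta$ gives $\mathcal H^{h_\kappa}(E)\le \mathcal H^{s}(E)$ for every set $E$. So it suffices to show $\mathcal H^s(\Gamma\cap\RR)=0$ a.s.\ for some $s<d$.

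\emph{Step 2: the case $\kappa\le4$.} Here the curve is simple and $\Gamma\cap\RR=\{0\}$ a.s. A single point has $\mathcal H^{h_\kappa}$-measure zero because $h_\kappa(r)\to0$.

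\emph{Step 3: the case $4<\kappa<8$.} Set $\alpha=8/\kappa-1\in(0,1)$, the one-point boundary exponent. By countable subadditivity and the scaling \eqref{eq:exact-scaling-H} (SLE is scale invariant, and the reflection $x\mapsto-x$ handles the negative axis), it suffices to show $\mathcal H^s(\Gamma\cap[1,2])=0$ a.s.\ for some $s\in(1-\alpha,d)$. Note that $1-\alpha=2-8/\kappa<1<d$, so such $s$ exist.

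\begin{itemize}
\item Use the standard boundary one-point estimate: for $x\in[1,2]$ and $\epsilon\in(0,1)$,
\[
\PP[\dist(x,\Gamma)<\epsilon]\le C\epsilon^{\alpha}.
\]
This follows from the law of $\sup$-approach of SLE to a boundary point (a Bessel-process/martingale computation), or it can be taken from Alberts--Sheffield.
\item Cover $[1,2]$ by $\epsilon^{-1}$ intervals of length $\epsilon$. The expected number of these intervals that meet $\Gamma$ is at most $C\epsilon^{\alpha-1}$.
\item Hence $\EE[\mathcal H^s_\epsilon(\Gamma\cap[1,2])]\le C\epsilon^{s-1+\alpha}\to0$.
\item By Fatou's lemma, together with monotonicity of $\mathcal H^s_\epsilon$ in $\epsilon$, we get $\EE[\mathcal H^s(\Gamma\cap[1,2])]=0$.
\end{itemize}

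Alternatively, I can cite the known result $\dim_H(\Gamma\cap\RR)=2-8/\kappa<d$ a.s.\ directly.

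The main obstacle is the one-point boundary estimate, with a constant that is uniform over $x\in[1,2]$. If it is not quoted, I would derive it from the Bessel-process description of $g_t(x)-U_t$, which gives the hitting probability of a boundary ball as $\asymp(\epsilon/x)^{8/\kappa-1}$. Everything else in the argument is routine covering.
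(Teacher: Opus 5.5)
Your argument is correct, but it takes a much longer route than necessary. The paper's proof is essentially your Step 1 with a different choice of exponent.

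In Step 1 you may choose any $s<d$. Since $d=1+\kappa/8>1$ (as you yourself note in Step 3), you can take $s\in(1,d)$. Then
\[
\mathcal H^{h_\kappa}(\Gamma\cap\RR)\le\mathcal H^{s}(\Gamma\cap\RR)\le\mathcal H^{s}(\RR)=0,
\]
because every subset of a line has zero $s$-dimensional Hausdorff measure when $s>1$. This is exactly the paper's proof.

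Your Steps 2--3 are valid: the split $\kappa\le 4$ versus $4<\kappa<8$, the boundary one-point estimate $\PP[\dist(x,\Gamma)<\epsilon]\le C\epsilon^{8/\kappa-1}$, and the covering and monotone-convergence argument all go through. They in fact prove the stronger statement $\dim_H(\Gamma\cap\RR)\le 2-8/\kappa$ almost surely, but the lemma never needs that extra information.

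The paper's route gives a deterministic statement, valid for every realization of the curve, with no SLE input and no case distinction. It also carries over verbatim to any gauge with $h(r)=o(r^{d'})$ for some $d'>1$, which is exactly what Remark~\ref{rem:general} relies on. Your route imports an external boundary-hitting estimate and only yields an almost-sure conclusion. It would be needed only for a gauge of exponent at most $1$, which cannot arise here since $d>1$.
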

\begin{proof}
Take $d'\in (1,d)$. Since $h_\kappa(r)/r^{d'}\to0$ as $r\downarrow0$, we get
$$\mathcal H^{h_\kappa}_\Gamma(\RR)=\mathcal H^{h_\kappa}_{\Gamma}(\Gamma\cap\RR)=\mathcal H^{h_\kappa}_{\RR}(\Gamma\cap\RR)\le \mathcal H^{d'}_{\RR}(\RR)=0.$$
\end{proof}

\section{The first-visit Hausdorff clock}

Let $\Haus=\mathcal H^{h_\kappa} _\Gamma$.  It is almost surely locally finite by \eqref{eq:local-expectation}.  For Borel $D\subset\CC$, put $\Theta_t^{\rm H}(D)=\Haus(D\cap\Gamma_t)$ and write $\Theta_t^{\rm H}=\Haus(\Gamma_t)$.  By monotone convergence, $\Theta_\infty^{\rm H}(D):=\lim_{t\to\infty}\Theta_t^{\rm H}(D)=\Haus(D)$; this terminal variable is integrable for bounded $D$ by \eqref{eq:local-expectation}.

\begin{lemma} \label{lem:clock-continuity}
Let $D\Subset\HH$ be open.  Then the process $t\mapsto\Theta_t^{\rm H}(D)$ is adapted, nondecreasing, and continuous.
\end{lemma}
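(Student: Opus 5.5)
Monotonicity is immediate: $\Gamma_s\subset\Gamma_t$ for $s\le t$ gives $\Theta_s^{\rm H}(D)\le\Theta_t^{\rm H}(D)$. We also need finiteness. Since $D\Subset\HH$ is bounded, $\Theta_t^{\rm H}(D)\le\Haus(D)<\infty$ almost surely by \eqref{eq:local-expectation}.

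For adaptedness, note that $\Haus(D\cap\Gamma_t)=\mathcal H^{h_\kappa}_{\Gamma}(D\cap\Gamma_t)$. By the consistency of restricted Hausdorff measures, this equals $\mathcal H^{h_\kappa}(D\cap\Gamma_t)$, the Hausdorff measure of the set $D\cap\Gamma_t$ itself. That set is determined by $\gamma|_{[0,t]}$, hence by $(U_s)_{s\le t}$. To make this measurable, I would approximate $\mathcal H^{h_\kappa}$ by the $\delta$-contents $\mathcal H^{h}_\delta$. Covers of a subset of the compact set $\Gamma_t$ can be restricted to finite covers by open balls with rational centers and rational radii, at an arbitrarily small cost in the gauge sum; continuity of $h_\kappa$ is what makes this cost small. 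Because $D$ is open, $D\cap\Gamma_t$ is a countable increasing union of the compact sets $K_n\cap\Gamma_t$, where $K_n=\{z:\dist(z,D^c)\ge1/n,\ |z|\le n\}$. Each event "the finite rational ball family covers $K_n\cap\Gamma_t$" is $\F_t$-measurable, since $\gamma$ is continuous and adapted. Taking countable infima, then $\delta\to0$, then $n\to\infty$ (monotone limits) shows $\Theta^{\rm H}_t(D)$ is $\F_t$-measurable. The usual augmentation absorbs the null sets on which things fail. This step is routine but fiddly.

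Continuity is the main point.

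\emph{Right-continuity.} We have $\bigcap_{u>t}\Gamma_u=\Gamma_t$, because $\gamma([t,u])$ shrinks to $\{\gamma(t)\}\subset\Gamma_t$. Since $\Haus(D)<\infty$, continuity from above of measures gives $\Theta_u^{\rm H}(D)\downarrow\Theta_t^{\rm H}(D)$.

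\emph{Left-continuity.} Here $\bigcup_{u<t}\Gamma_u=\gamma([0,t))$, so continuity from below gives the limit $\Haus(D\cap\gamma([0,t)))$. This differs from $\Theta_t^{\rm H}(D)$ only by the possible mass of the single point $\gamma(t)$. A point has zero $\mathcal H^{h_\kappa}$-measure because $h_\kappa(r)\to0$. Thus the jump is $\Haus(\{\gamma(t)\}\cap D\setminus\gamma([0,t)))=0$, and the process is continuous.

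The main obstacle is the adaptedness/measurability step. Continuity is elementary once finiteness of $\Haus(D)$ is known from \eqref{eq:local-expectation}. Note that the argument does not even need Lemma~\ref{lem:real-zero}, since $D\Subset\HH$.
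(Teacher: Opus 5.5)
Your proof is correct and is essentially the paper's. The paper phrases continuity through the distribution function of the finite, atomless pushforward of $\Haus|_D$ under the first-visit time $\sigma$. That is exactly your continuity-from-above/below argument, with the jump bounded by $\Haus(\{\gamma(t)\})=0$. The paper also proves adaptedness by the same countable-cover computation on compact sets exhausting $D$.

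One small repair is needed in the measurability step. Covers by single rational balls do not approximate $\mathcal H^{h_\kappa}$ at arbitrarily small cost: a set of diameter $r$ need not lie in a ball of diameter close to $r$. Such covers compute the spherical measure, which can differ from $\mathcal H^{h_\kappa}$ by a bounded factor. Instead, enlarge each cover member to an open set of slightly larger diameter. Then, by compactness of $K_n\cap\Gamma_t$, replace it with a finite union of rational balls contained in that open set, which does not increase its diameter. This is what the paper's ``finite unions of rational balls'' accomplishes.
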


\begin{proof}
For a deterministic compact $K$, the quantity $\mathcal H^{h_\kappa}_\Gamma(K\cap\Gamma_t)$ is $\F_t$-measurable: Hausdorff measure of a compact set can be computed by taking infima over a countable family of finite open covers whose members are finite unions of rational balls. Exhausting $D$ from inside by deterministic compact sets gives the same conclusion for $D$.  Thus $\Theta_t^{\rm H}(D)$ is adapted.

For continuity, define $\sigma(z)=\inf\{t\ge0:z\in\Gamma_t\}$ on $\Gamma$.  This map is Borel since $\{\sigma\le t\}=\Gamma_t$ is closed.  Since $\Haus$ gives zero mass to singletons, the finite pushforward measure $\sigma_*(\Haus|_D)$ is atomless because $\sigma^{-1}(\{t\})\subset\{\gamma(t)\}$.  Its distribution function $\Theta_t^{\rm H}(D)=\Haus(D\cap\{\sigma\le t\})$ is therefore continuous.
\end{proof}

\begin{lemma} \label{lem:future-measure}
Fix $t\ge0$ and set  $F_t(w)=g_t^{-1}(w+U_t)$, which is a conformal map from $\HH$ onto $H_t$ and a.s.\ extends continuously to $\overline\HH$.
Let $\widetilde\gamma$ be the chordal Loewner curve driven by $W_s:=U_{t+s}-U_t$.  By independent Brownian increments, $\widetilde\gamma$
is a chordal $\mathrm{SLE}_\kappa$ in $\HH$ from $0$ to $\infty$,
independent of $\F_t$. Let $\widetilde\mu_{\rm H}=\mathcal H^{h_\kappa}_{\widetilde \Gamma}$, where $\widetilde\Gamma:=\widetilde\gamma([0,\infty))$.  Then, for every nonnegative Borel function $f$ on $\CC$,
\begin{equation}\label{eq:future-pushforward}
 \int_{\Gamma\setminus\Gamma_t}f(z)\,\Haus(dz)
 =\int_{\widetilde\Gamma}f(F_t(w))|F_t'(w)|^d\,\widetilde\mu_{\rm H}(dw)
 \quad\text{a.s.}
\end{equation}
\end{lemma}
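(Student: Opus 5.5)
We will deduce the statement from the conformal change-of-variables formula, Lemma~\ref{lem:conformal-covariance}, applied to the conformal map $F_t$. The geometric input is the domain Markov property. Since the Loewner chain driven by $W$ satisfies $\widetilde g_s=g_{t+s}(\cdot)-U_t$ composed with $F_t$, we have $\gamma(t+s)=F_t(\widetilde\gamma(s))$ for all $s\ge0$. For $s>0$, the point $\widetilde\gamma(s)$ may lie on $\RR$ (when $4<\kappa<8$), and this is why we need the continuous extension of $F_t$ to $\overline\HH$. Hence $\gamma([t,\infty))=F_t(\widetilde\Gamma)$ as sets, with $F_t$ understood as its continuous extension.

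The first step handles the boundary, where $F_t$ is not conformal. On the open set $\HH$, $F_t$ is a conformal map onto $H_t$. Put $E=\widetilde\Gamma\cap\HH$, a Borel set. Then $F_t(E)=\gamma([t,\infty))\cap H_t$. Points of $\gamma([t,\infty))$ not in $H_t$ lie in $\partial H_t\subset\Gamma_t\cup\RR$.

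We then compare the two sides of \eqref{eq:future-pushforward} up to null sets. On the left, $\Gamma\setminus\Gamma_t=\gamma((t,\infty))\setminus\Gamma_t$. This differs from $F_t(E)$ only by a subset of $\RR$, which is $\Haus$-null by Lemma~\ref{lem:real-zero}. On the right, $\widetilde\Gamma\setminus E\subset\RR$ is $\widetilde\mu_{\rm H}$-null by Lemma~\ref{lem:real-zero} applied to $\widetilde\gamma$, which is itself an $\mathrm{SLE}_\kappa$. Since $f\ge0$, dropping null sets is harmless, and it suffices to prove
\[
\int_{F_t(E)}f\,\mathcal H^{h_\kappa}_{F_t(E)}(dz)=\int_E f(F_t(w))|F_t'(w)|^d\,\mathcal H^{h_\kappa}_E(dw).
\]

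The second step reconciles the measures. By the restriction consistency stated in the introduction, $\mathcal H^{h_\kappa}_{F_t(E)}=\Haus(\cdot\cap F_t(E))$ and $\mathcal H^{h_\kappa}_E=\widetilde\mu_{\rm H}(\cdot\cap E)$. The displayed identity is then exactly \eqref{eq:area-formula} with $U=\HH$, $V=H_t$ and $q=d$. This is legitimate because $h_\kappa$ is regularly varying with index $d$, as noted after Lemma~\ref{lem:conformal-covariance}.

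The formula is applied pathwise for each $\omega$ in a full-probability event. On that event we need three things: $\widetilde\gamma$ is generated by a continuous curve, $F_t$ extends continuously, and the conclusions of Lemma~\ref{lem:real-zero} hold for both $\gamma$ and $\widetilde\gamma$. Independence of $\widetilde\gamma$ from $\F_t$ plays no role in the identity itself; it is only recorded for later use.

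The main obstacle is the set identity $\gamma([t,\infty))\cap H_t=F_t(\widetilde\Gamma\cap\HH)$, i.e.\ that $F_t$ maps $\HH$ bijectively onto $H_t$ while every boundary contribution is null. The inclusion "$\supseteq$" is clear. For "$\subseteq$", take a point $\gamma(t+s)\in H_t$. Since boundary points of $\HH$ go to $\partial H_t$ under the extension, its preimage $\widetilde\gamma(s)$ must lie in $\HH$, because $\gamma(t+s)$ is an interior point of $H_t$.

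The remaining care concerns points of $\gamma((t,\infty))$ that revisit $\Gamma_t\setminus\RR$, which happens for $\kappa>4$. These lie outside $H_t$ and, being in $\Gamma_t$, are excluded on the left-hand side. On the right, the corresponding preimages lie on $\RR$ and are null. So the two sides match.
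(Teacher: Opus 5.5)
Your proposal is correct and follows essentially the same route as the paper. You use the domain Markov identity $\gamma([t,\infty))=F_t(\widetilde\Gamma)$, apply Lemma~\ref{lem:conformal-covariance} to $E=\widetilde\Gamma\cap\HH$ under $F_t:\HH\to H_t$, and discard $(\Gamma\setminus\Gamma_t)\setminus H_t$ and $\widetilde\Gamma\setminus\HH$ as subsets of $\RR$ that are null by Lemma~\ref{lem:real-zero}; your argument that the continuous extension sends $\RR$ into $\partial H_t$ only makes explicit a step the paper states without comment.
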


\begin{proof}
 SLE theory yields $ \gamma([t,\infty))=F_t(\widetilde\Gamma)$. Thus, $(\Gamma\setminus \Gamma_t)\cap H_t=F_t (\widetilde \Gamma\cap \HH)$. Lemma \ref{lem:conformal-covariance} gives \eqref{eq:future-pushforward} with the left integral restricted to $H_t$ and right integral restricted to $\HH$. We also get  $\til\Gamma\subset\lin\HH$, $\Gamma\setminus\Gamma_t\subset \gamma([t,\infty))=F_t(\widetilde\Gamma)\subset \overline{H_t}$ and $\partial H_t\subset \Gamma_t\cup \RR$. Thus, $(\Gamma\setminus\Gamma_t)\setminus H_t$ and $\til\Gamma\sem \HH$ are subsets of $\RR$ and are null by Lemma \ref{lem:real-zero}. Removing the restriction to $H_t$ and $\HH$ therefore leaves the integrals unchanged.
\end{proof}

\section{The mean Hausdorff measure is Green measure}

Define the mean measure  by $\nu(E)=\EE[\Haus(E)]$ for Borel $E$. Since $\Gamma\subset\overline\HH$ and
$\nu(\RR)=0$ by Lemma \ref{lem:real-zero}, we henceforth regard $\nu$ as a measure on $\HH$.  Local finiteness follows from \eqref{eq:local-expectation}.  Brownian scaling of SLE together with \eqref{eq:exact-scaling-H} gives
\begin{equation}\label{eq:nu-scaling}
 \nu(rE)=r^d\nu(E),\qquad r>0.
\end{equation}
In particular, $\nu(B(0,R)\cap\HH)=O(R^d)$ as $R\to\infty$.
Reflection of chordal SLE in the imaginary axis also gives
\begin{equation}\label{eq:nu-reflection}
 \nu(-\overline E)=\nu(E).
\end{equation}

\begin{lemma} \label{lem:hypo-smoothing}
The measure $\nu$ has a density $\rho\in C^\infty(\HH)$ with respect to $A$, and $\rho$ satisfies
\begin{equation}\label{eq:rho-pde}
 \frac\kappa2\rho_{xx}
 +\frac{2x}{x^2+y^2}\rho_x
 -\frac{2y}{x^2+y^2}\rho_y
 -2(2-d)\frac{x^2-y^2}{(x^2+y^2)^2}\rho=0.
\end{equation}
\end{lemma}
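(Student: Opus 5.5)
We derive a distributional identity for $\nu$ from the domain Markov property, turn it into the stated PDE in the sense of distributions, and then use hypoellipticity to get a smooth density.

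\emph{Step 1: the Markov identity.} Fix a nonnegative $\phi\in C_c^\infty(\HH)$ and a small time $t>0$. By Lemma \ref{lem:future-measure}, with $\widetilde\gamma$ independent of $\F_t$,
\[
\EE\Bigl[\int_{\Gamma\setminus\Gamma_t}\phi\,d\Haus\Bigm|\F_t\Bigr]
=\int_{\HH}\phi(F_t(w))|F_t'(w)|^d\,\nu(dw).
\]
Taking expectations and writing $Z_t(z)=g_t(z)-U_t$, so that $F_t^{-1}=Z_t$, we obtain
\[
\int\phi\,d\nu=\EE\int_{\Gamma_t}\phi\,d\Haus+\EE\int_{\HH}\phi(F_t(w))|F_t'(w)|^d\,\nu(dw).
\]
The first term vanishes for $t<\tau_\phi$, the hitting time of $\operatorname{supp}\phi$. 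Since $\Gamma_t$ lies in a small neighbourhood of $0$ with high probability, it is $o(t)$. To see this, bound it by $\EE[\Haus(\operatorname{supp}\phi)\mathbf 1_{\tau_\phi\le t}]$ and use uniform integrability from \eqref{eq:local-expectation}, together with $\PP(\tau_\phi\le t)$ decaying faster than any power of $t$.

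\emph{Step 2: generator and distributional PDE.} Expand the second term using the Loewner flow. For $w$ in a compact set away from the real line, $F_t(w)=g_t^{-1}(w+U_t)$, and by Itô's formula
\[
\frac{d}{dt}\EE\bigl[\phi(F_t(w))|F_t'(w)|^d\bigr]\Big|_{t=0}=(L^*\phi)(w),
\]
where $L^*$ is the generator of the backward flow, weighted by $|F'|^d$. Concretely, $L^*\phi=\frac\kappa2\phi_{xx}-\Re\bigl(\tfrac{2}{w}\bigr)\phi_x\cdots$ plus the zero-order term coming from $d\,\Re(2/w^2)$. It is cleaner to work with the forward map: use $\phi\circ F_t$ evaluated against $\nu$, and the identity that $\nu$ is invariant, $\int\phi\,d\nu=\int \EE[\phi\circ F_t\,|F_t'|^d]\,d\nu$. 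Then differentiate at $t=0$, justifying the interchange by the polynomial growth bound $\nu(B(0,R))=O(R^d)$ and standard Loewner estimates. One obstacle here is that near $w=0$ the flow is singular, but $\phi\circ F_t$ is supported away from $0$ for small $t$, and on $\{t<\tau_\phi\}$ this can be controlled.

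The result is $\int (\mathcal L\phi)\,d\nu=0$ for all test functions $\phi$, where $\mathcal L$ is the forward operator
\[
\mathcal L\phi=\frac\kappa2\phi_{xx}+\Re\frac{2}{z}\,\phi_x+\Im\frac{2}{z}\,\phi_y+d\,\Re\Bigl(-\frac{2}{z^2}\Bigr)\phi,
\]
that is, the infinitesimal action of $z\mapsto g_t(z)-U_t$ with Jacobian weight $|g_t'|^{d-2}$ after passing from $|F'|^d\,d\nu$ to Lebesgue coordinates. Equivalently, $\nu$ solves $\mathcal L^*\nu=0$ in $\mathcal D'(\HH)$. Computing the adjoint explicitly should give exactly \eqref{eq:rho-pde}; the exponent $2-d$ arises from $d$ minus the Lebesgue Jacobian $2$. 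I would double-check this bookkeeping against the known fact that $G$ satisfies \eqref{eq:rho-pde}.

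\emph{Step 3: hypoellipticity.} The operator in \eqref{eq:rho-pde} has the form $X_0^2$-type plus drift, with $X_1=\sqrt{\kappa/2}\,\partial_x$ and drift $X_0=\frac{2x}{|z|^2}\partial_x-\frac{2y}{|z|^2}\partial_y$. On $\HH$ the drift has nonzero $\partial_y$-component $-2y/|z|^2$, so $\partial_x$ and $[\partial_x,X_0]$ span the tangent space. Indeed, the $\partial_y$-coefficient of the commutator is $\partial_x(-2y/|z|^2)=4xy/|z|^4$, and where this vanishes (at $x=0$) the drift itself supplies $\partial_y$. Hörmander's condition therefore holds, in its form allowing the drift in the Lie algebra, and Hörmander's theorem gives that the distribution $\nu$ is a $C^\infty$ function $\rho$ solving \eqref{eq:rho-pde} classically.

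The main obstacle is Step 2: rigorously differentiating in $t$ and controlling the contribution of $\Gamma_t$ and the nonuniformity near $0$ and $\infty$. Using compact support of $\phi$ in $\HH$ and stopping at $\tau_\phi$ should suffice.
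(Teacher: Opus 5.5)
\textbf{The gap is in Step 2: you end up with the wrong operator.}

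Differentiating $w\mapsto\EE[\phi(F_t(w))|F_t'(w)|^d]$ at $t=0$ gives the generator of the \emph{reverse} Loewner flow $dZ_t=-\frac{2}{Z_t}\,dt+d\widehat U_t$, weighted by $|\partial_wZ_t|^d$:
\[
\mathcal A=\frac\kappa2\partial_{xx}-\frac{2x}{x^2+y^2}\partial_x+\frac{2y}{x^2+y^2}\partial_y+2d\,\frac{x^2-y^2}{(x^2+y^2)^2}.
\]
This is what you first wrote as $L^*$, before switching to ``the forward map''. The correct weak identity is $\int\mathcal A\phi\,d\nu=0$. The formal adjoint $\mathcal A^*$ expands exactly to \eqref{eq:rho-pde}: the divergence of the drift contributes $-4(x^2-y^2)/|z|^4$, which turns $2d$ into $-2(2-d)$, and the drift flips sign.

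Your $\mathcal L$ is instead the generator of $|g_t'(z)|^{d}\phi(g_t(z)-U_t)$, with weight $d$, not $d-2$. The identity $\int\mathcal L\phi\,d\nu=0$ is false. Its adjoint is $\mathcal L^*u=\frac\kappa2u_{xx}-\frac{2x}{|z|^2}u_x+\frac{2y}{|z|^2}u_y+2(2-d)\frac{x^2-y^2}{|z|^4}u$, whose drift and potential have the wrong sign. The Green function does not solve it: for $G=y^{d-2}q(x/y)$ the residue carries a factor $\kappa-8\neq0$. So the check against $G$ you proposed would have failed.

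Your ``$2-d$'' heuristic is real but belongs elsewhere. Equation \eqref{eq:rho-pde} says $\mathcal M\rho=0$ for the forward generator $\mathcal M$ with weight $|g_t'|^{2-d}$, and $\mathcal M=\mathcal A^*$. Obtaining it by changing variables in $\nu$ presupposes the density you are trying to prove. For the hypoellipticity argument you need the test-function form $\int\mathcal A\phi\,d\nu=0$.

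\textbf{Making Step 2 rigorous.} Your It\^o computation at $t=0$ is fine pointwise. However, $t\mapsto F_t(w)=g_t^{-1}(w+U_t)$ is not an autonomous diffusion, since its It\^o differential involves $F_t''$. So ``standard Loewner estimates'' hides the real work of passing the limit under $\nu$.

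The paper handles this by time reversal. With $\widehat U_s=U_{t_0}-U_{t_0-s}$ one has $(F_t(w),F_t'(w))\stackrel{d}{=}(Z^w_t,\partial_wZ^w_t)$ for $t\le t_0$. Hence $P_tf(w)=\EE_w[J_t^df(Z_t)]$ is a genuine Feynman--Kac semigroup, and
\[
\frac{P_tf-f}{t}=\frac1t\int_0^t\EE_w\bigl[J_s^d\mathcal Af(Z_s)\bigr]\,ds .
\]
Since $\Im Z_t$ is nondecreasing with $\frac{d}{dt}(\Im Z_t)^2\le4$, only starting points in a strip $\{\eta/\sqrt2\le\Im w\le M\}$ matter. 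The coefficients are bounded on that strip. This yields uniform convergence on compacts plus Gaussian tails in $\Re w$, which are integrable against $\nu(B(0,R))=O(R^d)$.

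\textbf{Two smaller remarks.}
\begin{itemize}
\item In Step 1, the $\Gamma_t$-term is identically zero once $2\sqrt t<\dist(\operatorname{supp}\phi,\RR)$, by the deterministic height bound for hulls. Your uniform-integrability argument, with only a first moment available, would give $o(1)$, not $o(t)$.
\item Step 3 is correct as applied to the operator in \eqref{eq:rho-pde}. H\"ormander's 1967 theorem counts $X_0$ itself among the spanning fields, so using the drift's $\partial_y$-component at $x=0$ is legitimate. The paper uses $[X_1,[X_1,X_0]]$ there instead.
\end{itemize}
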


\begin{proof}
Let $0\le f\in C_c^\infty(\HH)$ be nonzero and put $K=\operatorname{supp}f$.  Choose $t>0$ so small that $2\sqrt t<\dist(K,\RR)$.
The deterministic height bound for a hull of half-plane capacity $2t$ implies that no point of $K$ has been hit or swallowed by time $t$, for any driving function.  Taking conditional expectation with respect to $\mathcal F_t$ and then unconditional expectation in \eqref{eq:future-pushforward} and using the fact that $\widetilde\mu_H$ has the same law as $\mu_H$ and is independent of $\mathcal F_t$ therefore gives
\begin{equation}\label{eq:reverse-invariance-forward}
 \int f(z)\,\nu(dz)
 =\EE\int f(F_t(w))|F_t'(w)|^d\,\nu(dw).
\end{equation}

 Fix a terminal time $t_0\in(0,\infty)$ and set
\[
 h_t(z)=g_{t_0-t}(g_{t_0}^{-1}(z)),\qquad 0\le t\le t_0.
\]
Then $h_0(z)=z$, $h_{t_0}(z)=g_{t_0}^{-1}(z)$, and $h_t$ satisfies the reversed Loewner equation:
\[
 \partial_t h_t(z)=-\frac{2}{h_t(z)-U_{t_0-t}}.
\]
Let $\widehat U_t=U_{t_0}-U_{t_0-t}$ and $Z_t=h_t(w+U_{t_0})-U_{t_0-t}$. Then $(\widehat U_t)_{0\le t\le t_0}$ has the law of $(\sqrt\kappa B_t)_{0\le t\le t_0}$, and $Z_t$ satisfies
\[
 dZ_t=-\frac{2}{Z_t}\,dt+d\widehat U_t,\qquad Z_0=w.
\]
Thus
\BGE (F_{t_0}(w),F_{t_0}'(w)) \stackrel{d}{=} (Z_{t_0}^w,\partial_w Z_{t_0}^w).\label{law}\EDE
In fact, this equality holds with $t_0$ replaced by any $t\in [0,t_0]$.
For the following calculations, we define $\widehat B_t=\widehat U_t/\sqrt\kappa$, which is a standard Brownian motion on $[0,t_0]$.
Write $Z_t=X_t+iY_t$ and $J_t=|\partial_w Z_t|$.  Then, for $t\in [0,t_0]$,
\begin{align}
 dX_t&=-\frac{2X_t}{X_t^2+Y_t^2}\,dt+\sqrt\kappa\,d\widehat B_t,\nonumber\\
 dY_t&= \frac{2Y_t}{X_t^2+Y_t^2}\,dt,\label{eq:reverse-y}\\
 d\log J_t&
 = \frac{2(X_t^2-Y_t^2)}{(X_t^2+Y_t^2)^2}\,dt.\nonumber
\end{align}
Let $\mathcal A$ denote the differential operator:
\begin{equation}\label{eq:reverse-operator}
 \mathcal A
 =\frac\kappa2\partial_{xx}
 -\frac{2x}{x^2+y^2}\partial_x
 +\frac{2y}{x^2+y^2}\partial_y
 +2d\frac{x^2-y^2}{(x^2+y^2)^2}.
\end{equation}
Then  It\^o's formula and the compact support of $f$ give
\[
 \left.\frac{d}{dt}\right|_{t=0}
 \EE_w[J_t^d f(Z_t)]=\mathcal Af(w).
\]
Using \eqref{eq:reverse-invariance-forward}, which holds for sufficiently small $t$, \eqref{law}, which holds with $t_0$ replaced by $t\in [0,t_0]$, and differentiating at $t=0$, with the interchange of differentiation and integration justified below, yields
\begin{equation}\label{eq:distributional-A}
 \int_{\HH}\mathcal Af\,d\nu=0,
 \qquad f\in C_c^\infty(\HH)\text{ and }f\ge 0.
\end{equation}
The identity extends to all $f\in C_c^\infty(\HH)$ by
linearity, since every such function is a difference of
two nonnegative functions in $C_c^\infty(\HH)$.
So $\mathcal A^*\nu=0$ in the sense of distributions.

To justify the interchange, set $P_tf(w)=\EE_w[J_t^d f(Z_t)]$
and $Q_t=(P_tf-f)/t$. We show that $Q_t\to\mathcal Af$
in $L^1(\nu)$. Set $\eta=\min_K\Im z$, $M=\max_K\Im z$,
and choose $R_0$ with $K\subset\{|\Re z|\le R_0\}$.
By \eqref{eq:reverse-y}, $Y_t$ is nondecreasing and
$\frac{d}{dt}Y_t^2\le4$. Thus, for
$0<t\le t_0<\eta^2/8$, the condition $Z_t^w\in K$
requires $\eta/\sqrt2\le\Im w\le M$.
Consequently, both $Q_t$ and $\mathcal Af$ vanish outside
the fixed strip $S=\{w:\eta/\sqrt2\le\Im w\le M\}$.

For starting points $w\in S$, the whole reverse path satisfies
$Y_t\ge\eta/\sqrt2$, so the drift and the potential in
\eqref{eq:reverse-operator} are uniformly bounded.
Hence $|Z_t^w-w|\le\sqrt\kappa|\widehat B_t|+Ct$ and
$|J_t^d-1|\le Ct e^{Ct}$, with constants independent of $w$.
It\^o's formula gives
$Q_t(w)=t^{-1}\int_0^t\EE_w[J_s^d\mathcal Af(Z_s)]\,ds$.
Since $\mathcal Af$ is bounded and globally Lipschitz,
these estimates imply
$\sup_{w\in S}|Q_t(w)-\mathcal Af(w)|\le C(\sqrt t+t)$.
Thus $Q_t\to\mathcal Af$ uniformly on $S$.
For each fixed $R<\infty$, local finiteness of $\nu$ gives
convergence in $L^1(\nu)$ on the compact rectangle
$S\cap\{|\Re w|\le R\}$.
To obtain convergence on the whole strip, we control the
tails as follows.

For $w=x+iy$ in the strip with $|x|$ sufficiently large, $f(w)=\mathcal Af(w)=0$, and reaching $K$ requires a Brownian displacement of at least $|x|-R_0-Ct $. Using the integral representation of $Q_t$, the inclusion
$\operatorname{supp}(\mathcal Af)\subset K$, and the
Gaussian tail bound gives
\[
 |Q_t(x+iy)|\le C
 \exp\!\left(-\frac{(|x|-R_0-Ct_0)^2}{Ct}\right)
 \le C_1e^{-c_1x^2},\qquad 0<t\le t_0,
\]
where the last bound holds uniformly for sufficiently large $|x|$, and $C_1,c_1$ may depend on $t_0$.  The polynomial growth in \eqref{eq:nu-scaling} makes this bound $\nu$-integrable on the strip.  Dominated convergence on the tails, together with the compact-set convergence, proves the asserted $L^1(\nu)$ convergence and hence \eqref{eq:distributional-A}.

It remains to use hypoellipticity.  Ignoring the zero-order term in \eqref{eq:reverse-operator}, introduce the vector fields
\[
 X_1=\sqrt\kappa\,\partial_x,
 \qquad
 X_0=-\frac{2x}{x^2+y^2}\partial_x
      +\frac{2y}{x^2+y^2}\partial_y.
\]
For $x\ne0$, the $\partial_y$ component of $[X_1,X_0]$ is $-4\sqrt\kappa\,xy/(x^2+y^2)^2$, so $X_1$ and $[X_1,X_0]$ span the tangent plane.  At $x=0$ this component vanishes, but the $\partial_y$ component of $[X_1,[X_1,X_0]]$ equals $-4\kappa/y^3\ne0$.
Thus H\"ormander's bracket condition holds at every point of $\HH$.  The formal adjoint $\mathcal A^*$ has the same second-order field and the opposite first-order field, hence satisfies the same bracket condition; zero-order terms do not affect hypoellipticity.  H\"ormander's theorem \cite{Hormander1967} now implies that every distributional solution of $\mathcal A^*u=0$ is smooth.  Applied to the Radon measure $u=\nu$, this gives $\nu(dz)=\rho(z)A(dz)$ with $\rho\in C^\infty(\HH)$.  The distributional equation $\mathcal A^*\nu=0$ is therefore classical; expanding the formal adjoint gives \eqref{eq:rho-pde}.
\end{proof}

\begin{proposition} \label{prop:mean-green}
Let $G$ be as in \eqref{eq:green}.
There exists a  constant $C_\kappa\in(0,\infty)$ such that
\begin{equation}\label{eq:mean-green}
 \rho(z)=C_\kappa G(z) ,\qquad z\in\HH.
\end{equation}
\end{proposition}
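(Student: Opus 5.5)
Our plan is to use the scaling, reflection, and PDE information about $\rho$ to reduce \eqref{eq:rho-pde} to an ODE in the angle variable, and then identify $G$ as the unique admissible solution.

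\emph{Step 1: reduction to an angular profile.} By \eqref{eq:nu-scaling}, $\nu(rE)=r^d\nu(E)$. Since $\rho$ is smooth, this gives $\rho(rz)=r^{d-2}\rho(z)$. Write $z=re^{i\theta}$ with $\theta\in(0,\pi)$. Then
\[
\rho(z)=[\Im z]^{d-2}\phi(\theta)
\]
for a smooth function $\phi$ on $(0,\pi)$. By \eqref{eq:nu-reflection}, $\phi(\pi-\theta)=\phi(\theta)$.

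\emph{Step 2: the ODE.} We substitute this form into \eqref{eq:rho-pde}. Because the operator is homogeneous of degree $-2$ and commutes with the scaling, the PDE collapses to a second-order linear ODE for $\phi$, with coefficients smooth on $(0,\pi)$. It is most convenient to work in the variable $u=x/y=\cot\theta\in\RR$. Then $\rho=y^{d-2}\psi(u)$, and \eqref{eq:rho-pde} becomes a second-order ODE in $u$ whose leading coefficient $\frac\kappa2 y^{-2}$ does not vanish. So the ODE is regular on all of $\RR$, and its solution space is two-dimensional.

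We know that $G$ solves \eqref{eq:rho-pde}, since $\mathcal A^*G=0$ is the standard Green-function equation; this can be checked directly or taken from the martingale \eqref{eq:LR-doob-meyer} via the same reverse-flow computation. So one solution is $\psi_1(u)=(1+u^2)^{-(4/\kappa-1/2)}$, which is $\sin^{8/\kappa-1}\theta$ written in $u$. A second, independent solution is $\psi_2=\psi_1\int^u \psi_1^{-2}W$, obtained by reduction of order, where $W$ is the Wronskian factor $\exp(-\int p)$ and $p$ is the first-order coefficient.

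\emph{Step 3: boundary behaviour kills the second solution.} We expect this to be the main obstacle. The symmetry $\phi(\pi-\theta)=\phi(\theta)$ forces $\psi$ to be even. We want to exclude any component along the even combination of $\psi_2$. To do this we use growth as $|u|\to\infty$, that is, near the real line. Using the polynomial $\nu$-mass bound $\nu(B(0,R)\cap\HH)=O(R^d)$ together with local integrability near $\RR$, we will show that $\rho$ is locally integrable up to $\RR\setminus\{0\}$. Then we compute the asymptotics: $\psi_1\sim|u|^{1-8/\kappa}$, while the second solution behaves like $\psi_1\int\psi_1^{-2}W$. From the drift term $\frac{2x}{x^2+y^2}\rho_x$, the Wronskian factor $W$ grows or decays like a power of $|u|$. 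We expect the second solution to be of order $|u|^{\beta}$ with $\beta$ large enough that $y^{d-2}\psi_2$ fails to be integrable near $\RR$, or that it has the wrong sign or parity. If pure integrability does not separate the two solutions for some $\kappa$ (for example when $\kappa\le4$ both exponents might be integrable), we will fall back on a sharper input. Specifically, $\nu(\{0<\Im z<\epsilon\}\cap B(1,1/2))\to0$ at a rate controlled by the boundary-hitting exponent of SLE: $\PP(\dist(1,\Gamma)<\epsilon)\asymp\epsilon^{8/\kappa-1}$, which combined with \eqref{eq:local-expectation} and scaling gives a decay bound on $\nu$ of a strip near $1$. This forces the profile to decay at least like $\psi_1$ as $|u|\to\infty$.

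\emph{Step 4: positivity of the constant.} The previous steps give $\rho=C_\kappa G$ for some constant $C_\kappa$. We have $C_\kappa<\infty$ since $\nu$ is locally finite. We have $C_\kappa>0$ since \eqref{eq:positive-finite-segments} gives $\nu\neq0$, and $\nu(\RR)=0$ by Lemma \ref{lem:real-zero}.
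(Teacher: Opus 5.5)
\textbf{Gap in Step 3.} Steps 1, 2 and 4 follow the paper's argument. Step 3, which you flag as the main obstacle, is unnecessary, and the mechanism you propose for it would fail.

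\textbf{The missing observation.} The ODE in $u$, namely $\frac\kappa2 q''+\frac{4u}{1+u^2}q'+\frac{4(2-d)}{(1+u^2)^2}q=0$, is invariant under $u\mapsto -u$, because the odd coefficient $4u/(1+u^2)$ multiplies $q'$. If $q$ is a solution with $q'(0)=0$, then $q(-u)$ is a solution with the same initial data, so $q$ is even. Conversely, every even solution has $q'(0)=0$. Hence the even solutions form a one-dimensional space spanned by the even solution $\psi_1$.

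Concretely, $W=(1+u^2)^{-4/\kappa}$, and with lower limit $0$ you get $\psi_2(u)=\psi_1(u)\int_0^u(1+s^2)^{4/\kappa-1}\,ds$. This is odd. Changing the lower limit only adds a multiple of $\psi_1$, so there is no ``even combination of $\psi_2$'' independent of $\psi_1$. The evenness $\phi(\pi-\theta)=\phi(\theta)$ you already derived in Step 1 therefore forces $\psi=c\,\psi_1$ immediately. This is exactly how the paper concludes.

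\textbf{Why your substitute fails.} Since $8/\kappa-1>0$, the integral grows like $|u|^{8/\kappa-1}/(8/\kappa-1)$. Hence $\psi_2(u)\to\pm\kappa/(8-\kappa)$ as $u\to\pm\infty$, so $\psi_2$ is bounded; there is no large exponent $\beta$. Near any point of $\RR\setminus\{0\}$, the second solution contributes a density comparable to $y^{d-2}$, which is integrable because $d-2>-1$. So local integrability never separates the two solutions, for any $\kappa\in(0,8)$, not just for $\kappa\le4$.

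\textbf{Your fallback.} It would in principle separate them: a strip of height $\epsilon$ near $1$ carries mass of order $\epsilon^{d-2+8/\kappa}$ under $\psi_1$ versus $\epsilon^{d-1}$ under $\psi_2$. However, it requires bounding the expected Hausdorff mass in $B(x,\epsilon)$ given that the curve comes near $x$. That needs a strong Markov and distortion argument at the approach time, which does not follow from \eqref{eq:local-expectation} and scaling alone. As written, this step remains an expectation rather than a proof.

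\textbf{Another quick fix.} Nonnegativity of $\rho$ also kills $\psi_2$. Its limits at $\pm\infty$ are nonzero with opposite signs while $\psi_1\to0$, so any combination with a nonzero $\psi_2$ coefficient becomes negative on one side.
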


\begin{proof}
Scaling \eqref{eq:nu-scaling} and reflection \eqref{eq:nu-reflection} together with the continuity of $\rho$ imply
\begin{equation}\label{eq:rho-scale-reflect}
 \rho(rz)=r^{d-2}\rho(z),\qquad
 \rho(-x+iy)=\rho(x+iy).
\end{equation}

By homogeneity, write $\rho(x+iy)=y^{d-2}q(u)$ with $u=x/y$.  Substitution into \eqref{eq:rho-pde} gives
\begin{equation}\label{eq:q-ode}
 \frac\kappa2 q''(u)
 +\frac{4u}{1+u^2}q'(u)
 +\frac{4(2-d)}{(1+u^2)^2}q(u)=0.
\end{equation}
One solution of \eqref{eq:q-ode} is $q_1(u)=(1+u^2)^{1/2-4/\kappa}$.  Reduction of order gives a second independent solution
\begin{equation*}
 q_2(u)=q_1(u)
 \int_0^u(1+s^2)^{4/\kappa-1}\,ds.
\end{equation*}
The first solution is even and the second is odd.  The reflection symmetry in \eqref{eq:rho-scale-reflect} says that $q$ is even, so its $q_2$ coefficient vanishes.  Hence for some constant $c\ge 0$,
\[
 \rho(z)=c  y^{d-2}(1+(x/y)^2)^{1/2-4/\kappa}
 =c (\Im z)^{d-2}(\sin\arg z)^{8/\kappa-1}.
\]
Using \eqref{eq:green}, we obtain \eqref{eq:mean-green} for some finite $C_\kappa\ge0$.

We now prove that $C_\kappa>0$.  By Theorem \ref{thm:exact-input}, $\mathcal H^{h_\kappa}_\Gamma(\gamma([1,2]))>0$ almost surely.  Since $\gamma([1,2])$ is bounded, there is $n\in\mathbb N$ such that $\PP[\mathcal H^{h_\kappa}_\Gamma(\gamma([1,2])\cap B(0,n))>0]>0$, which further implies that $\PP[\mathcal H^{h_\kappa}_\Gamma( B(0,n))>0]>0$, which implies that $\nu(B(0,n))>0$. Hence $C_\kappa>0$.
\end{proof}

%\begin{remark}\label{rem:constant}
%For every bounded Borel set $D\subset\HH$ with $\int_DG(z)A(dz)>0$,
%$$ %\begin{equation}\label{eq:C-ratio}
% C_\kappa=
% \frac{\EE\bigl[\mathcal H^{h_\kappa}(\Gamma\cap D)\bigr]}
% {\int_DG(z)\,A(dz)}.
%$$
%Thus the constant is characterized by a one-point mean normalization.  The present argument does not evaluate it %explicitly.
%\end{remark}
%\end{comment}

\section{Conditional expectation and finite-variation uniqueness}

% We next upgrade the mean identity to a conditional identity.

\begin{proposition} \label{prop:haus-compensator}
Let $D\Subset\HH$ be open.  Then
\begin{equation}\label{eq:haus-DM}
 \Theta_t^{\rm H}(D)+C_\kappa\Psi_t(D)\quad\text{is an }\mathcal F\text{-martingale.}
\end{equation}
\end{proposition}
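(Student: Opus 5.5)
The plan is to compute the conditional expectation of the terminal variable $\Haus(D)=\Theta^{\rm H}_\infty(D)$ given $\F_t$, and show that
\[
\EE\bigl[\Haus(D)\mid\F_t\bigr]=\Theta^{\rm H}_t(D)+C_\kappa\Psi_t(D)\quad\text{a.s.}
\]
Since $\Haus(D)$ is integrable by \eqref{eq:local-expectation} ($D$ is bounded), the left side is a uniformly integrable martingale, and this identity gives \eqref{eq:haus-DM}.

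First split $\Haus(D)=\Haus(D\cap\Gamma_t)+\Haus(D\cap(\Gamma\setminus\Gamma_t))$. The first term is $\Theta^{\rm H}_t(D)$, which is $\F_t$-measurable by Lemma \ref{lem:clock-continuity}.

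For the second term, apply Lemma \ref{lem:future-measure} with $f=\mathbf 1_D$:
\[
\Haus(D\setminus\Gamma_t)=\int\mathbf 1_D(F_t(w))|F_t'(w)|^d\,\widetilde\mu_{\rm H}(dw).
\]
The map $F_t$ is $\F_t$-measurable, and $\widetilde\mu_{\rm H}$ is independent of $\F_t$ with the law of $\Haus$. By the freezing lemma for nonnegative jointly measurable functionals, the conditional expectation equals $\int\mathbf 1_D(F_t(w))|F_t'(w)|^d\,\nu(dw)$. Proposition \ref{prop:mean-green} gives $\nu(dw)=C_\kappa G(w)A(dw)$ on $\HH$.

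Next change variables $z=F_t(w)$. The Jacobian is $A(dz)=|F_t'(w)|^2A(dw)$, and $F_t^{-1}(z)=g_t(z)-U_t$ maps $H_t$ to $\HH$, sending the tip to $0$ and $\infty$ to $\infty$. Then \eqref{GD} gives
\[
\int\mathbf 1_D(F_t(w))|F_t'(w)|^dG(w)\,A(dw)=\int_{D\cap H_t}|(F_t^{-1})'(z)|^{2-d}G(F_t^{-1}(z))\,A(dz)=\Psi_t(D).
\]
Together with Lemma \ref{lem:real-zero}, which makes boundary and real points negligible, this yields the displayed identity.

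The main obstacle is the measurability needed for the freezing step. We need the map $(\omega,\mu)\mapsto\int\mathbf 1_D(F_t)|F_t'|^d\,d\mu$ to be jointly measurable, where $F_t$ depends on $\F_t$ and $\mu$ ranges over locally finite measures. We also need the Hausdorff measure $\widetilde\mu_{\rm H}$ to be a measurable function of the future driving increments, so that it is genuinely independent of $\F_t$ with the same law as $\Haus$.

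I would handle this as follows. Approximate $\mathbf 1_D$ by continuous compactly supported functions, and note that $|F_t'|^d$ is continuous on $\HH$. It then suffices to check measurability of $\mu\mapsto\int\phi\,d\mu$ for continuous $\phi$, with $\phi$ depending measurably on $\omega$. The measurability of $\widetilde\mu_{\rm H}$ follows from the countable-cover argument of Lemma \ref{lem:clock-continuity}. A monotone class argument then extends the identity from continuous $f$ to $f=\mathbf 1_D$.
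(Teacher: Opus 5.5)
Your proposal is correct and follows essentially the same route as the paper: apply Lemma~\ref{lem:future-measure} with $f=\mathbf 1_D$, condition on $\F_t$ using the independence of $\widetilde\mu_{\rm H}$ and the mean-measure identity $\nu=C_\kappa G\,A$ from Proposition~\ref{prop:mean-green}, change variables $z=F_t(w)$ and use \eqref{GD} to identify the result as $C_\kappa\Psi_t(D)$, and then add $\Theta^{\rm H}_t(D)$. The only difference is that you explicitly address the measurability needed for the freezing step, which the paper leaves implicit; your sketch is adequate, though a monotone-class argument on product functions $a(\omega)b(w)$ gives it directly, without approximating $\mathbf 1_D$ by continuous functions.
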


\begin{proof}
Apply Lemma \ref{lem:future-measure} to $f=\mathbf{1}_D$ and condition on $\mathcal F_t$.  Since $\til\mu_H$ is independent of $\F_t$ and its mean measure is $C_\kappa G(w)A(dw)$ by Proposition \ref{prop:mean-green},
\BGE \label{condition}
 \EE[\Theta_\infty^{\rm H}(D)-\Theta_t^{\rm H}(D)\mid\F_t]
 =C_\kappa\int_{F_t^{-1}(D\cap H_t)}|F_t'(w)|^dG(w)\,A(dw).
\EDE
Change variables $z=F_t(w)$.  By \eqref{GD},
\( |F_t'(w)|^{d-2}G(w)
 =|g_t'(z)|^{2-d}G(g_t(z)-U_t)
 =G_{H_t}(z;\gamma(t),\infty).
\)
Thus, the righthand side of \eqref{condition} equals $C_\kappa\Psi_t(D)$ by \eqref{Psi}.  

Adding $\Theta_t^{\rm H}(D)$ to both sides of \eqref{condition} yields \eqref{eq:haus-DM}.
\end{proof}

% We can now prove the main theorem.

\begin{proof}[Proof of Theorem \ref{thm:main}]
Fix a bounded open rational rectangle $D\Subset\HH$.  By \eqref{eq:LR-doob-meyer} and \eqref{eq:haus-DM}, $N_t(D):=C_\kappa^{-1}\Theta_t^{\rm H}(D)-\Theta_t^{\rm O}(D)$ is a martingale.

Both $\Theta_t^{\rm H}(D)$ and $\Theta_t^{\rm O}(D)$ are continuous nondecreasing processes.  Continuity of the first is Lemma \ref{lem:clock-continuity}; continuity of the second follows from \cite{LRMinkowski}.  Thus $N(D)$ is a continuous martingale whose paths have locally finite variation, which must be a.s.\ constant.  Since $N_0(D)=0$,
\begin{equation}\label{eq:D-equality-time}
 \Theta_t^{\rm H}(D)=C_\kappa\Theta_t^{\rm O}(D)
 \qquad\text{for all }t\ge0,\quad\text{almost surely.}
\end{equation}

Take the countable determining class of bounded open rational rectangles whose closures lie in $\HH$, and intersect the corresponding probability-one events.  Letting $t\to\infty$ in \eqref{eq:D-equality-time} and using monotone convergence gives $\Haus(D)=C_\kappa\Occ(D)$ simultaneously on this determining class.  Hence
\begin{equation}\label{eq:measure-H}
 \left.\Haus\right|_{\HH}=C_\kappa\left.\Occ\right|_{\HH},\quad\text{almost surely.}
\end{equation}

By Lemma \ref{lem:real-zero}, $\Haus(\RR)=0$.  The occupation measure $\Occ$ also gives zero mass to $\RR$ by \cite{LRMinkowski}.  Both measures are supported on $\Gamma\subset\overline\HH$.  Thus \eqref{eq:measure-H} extends to all of $\CC$.  The curve-segment assertion follows from \eqref{eq:LR-segment} on the same probability-one event, completing the proof.
\end{proof}

\section{Consequences and a general identification principle}

\begin{corollary}[Double points]\label{cor:double}
Let $\mathrm{Doub}=\{z:\gamma(s)=\gamma(t)=z\text{ for some }0\le s<t<\infty\}$ be the set of SLE double points.  Then $\mathcal H^{h_\kappa}_\Gamma(\mathrm{Doub})=0$ almost surely.
\end{corollary}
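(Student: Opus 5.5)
By Theorem~\ref{thm:main}, almost surely $\mathcal H^{h_\kappa}_\Gamma=C_\kappa\,\Occ$ as measures on $\CC$. So it suffices to prove $\Occ(\mathrm{Doub})=0$ almost surely. The null event for the main theorem is fixed once, and no further exceptional sets arise. We also need $\mathrm{Doub}$ to be Borel, or at least contained in a Borel $\Occ$-null set. Write $\mathrm{Doub}$ as a countable union over rationals $p<q$ of the sets $\gamma([0,p])\cap\gamma([q,\infty))$. Each of these is an intersection of a compact set with a $\sigma$-compact set, hence Borel. Every double point with $s<t$ lies in one of them: choose rationals $s<p<q<t$, or $p=q$ adjusted appropriately. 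Because $s<t$ strictly, we may take $p,q$ with $s\le p<q\le t$, and then $z=\gamma(s)\in\gamma([0,p])$ and $z=\gamma(t)\in\gamma([q,\infty))$.

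It therefore remains to show $\Occ(\gamma([0,p])\cap\gamma([q,\infty)))=0$ for fixed $p<q$. We use the representation $\Occ=\gamma_*(d\Theta^{\rm O})$ from Section~2. The key input is \eqref{eq:LR-segment}, which the paper already notes gives zero occupation mass to overlaps of separated time intervals. Concretely, for $T>q$,
\[
 \Occ(\gamma([0,p]))+\Occ(\gamma([q,T]))=\Theta^{\rm O}_p+(\Theta^{\rm O}_T-\Theta^{\rm O}_q)\le \Theta^{\rm O}_T=\Occ(\gamma([0,T])).
\]
Moreover $\gamma([0,p])\cup\gamma([q,T])\subset\gamma([0,T])$. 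Inclusion--exclusion then gives
\[
\Occ(\gamma([0,p])\cap\gamma([q,T]))\le \Occ(\gamma([0,p]))+\Occ(\gamma([q,T]))-\Occ(\gamma([0,p])\cup\gamma([q,T])).
\]
To bound this we need the union to carry mass at least $\Theta^{\rm O}_p+\Theta^{\rm O}_T-\Theta^{\rm O}_q$. The slicker way is to apply \eqref{eq:LR-segment} directly with $s=q$, $t=T$: it gives $\Occ(\Gamma_T\setminus\Gamma_q)=\Occ(\gamma([q,T]))$. Hence $\Occ(\gamma([q,T])\cap\Gamma_q)=0$, and since $\gamma([0,p])\subset\Gamma_q$ the intersection is null. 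Letting $T\to\infty$ through the integers and using countable subadditivity gives $\Occ(\gamma([0,p])\cap\gamma([q,\infty)))=0$.

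Summing over rational pairs yields $\Occ(\mathrm{Doub})=0$, and hence $\mathcal H^{h_\kappa}_\Gamma(\mathrm{Doub})=C_\kappa\Occ(\mathrm{Doub})=0$ almost surely. The only delicate point is the one just used: extracting from \eqref{eq:LR-segment} that $\Occ(\gamma([q,T])\cap\Gamma_q)=0$. This holds simultaneously for all $q<T$ on a single probability-one event, so the countable union causes no measurability or null-set issue.
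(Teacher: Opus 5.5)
Your proof is correct and follows essentially the same route as the paper: cover $\mathrm{Doub}$ by countably many sets of the form (initial segment)$\,\cap\,$(later segment) — the paper uses $\Gamma_q\cap\gamma([q,n])$, and you use the slightly smaller $\gamma([0,p])\cap\gamma([q,T])$ — then note that each is $\Occ$-null by \eqref{eq:LR-segment} and transfer this to $\mathcal H^{h_\kappa}_\Gamma$ via Theorem~\ref{thm:main}. The inclusion--exclusion detour you abandon midway is unnecessary and can be deleted.
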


\begin{proof}
By \eqref{eq:LR-segment},   $\Occ(\Gamma_q\cap\gamma([q,n]))=0$ for every rational $q>0$ and integer $n>q$.  These null sets cover $\mathrm{Doub}$.  Their countable union is $\Haus$-null by Theorem \ref{thm:main}, proving the claim.
\end{proof}

\begin{remark}[Other exact gauges]\label{rem:general}
The same proof of Theorem \ref{thm:main} applies to any
gauge $h$ satisfying \eqref{eq:regular-variation} with $q=d$
and for which \eqref{eq:positive-finite-segments} and
\eqref{eq:local-expectation} hold with $h_\kappa$ replaced by $h$. Indeed, \eqref{eq:regular-variation} gives Lemma \ref{lem:conformal-covariance} and standard regular-variation theory (equivalently, Potter bounds) also gives $h(r)/r^{d'}\to0$ for any $d'\in (1,d)$, while \eqref{eq:positive-finite-segments} and \eqref{eq:local-expectation} give local finiteness of the mean measure and finiteness and positivity of the identification constant.  Hence Theorem \ref{thm:main} holds for such $h$.
\end{remark}

\section*{Use of artificial intelligence}
The author used GPT~6 to explore proof ideas, locate relevant
references, and assist with the exposition.
The author simplified the proofs, reorganized the arguments, revised the exposition, carefully checked the mathematical details, and reviewed the final manuscript.
The author takes full responsibility for its contents.


\begin{thebibliography}{99}
\bibitem{Beffara}
V.~Beffara,
\emph{The dimension of the SLE curves},
Ann. Probab. \textbf{36} (2008), no.~4, 1421--1452.

\bibitem{Chistyakov2003}
D.~V. Chistyakov,
\emph{Fractal measures, p-adic numbers and continues transition between dimensions},
preprint (2003), \href{https://arxiv.org/abs/math/0302162}{arXiv:math/0302162}.

\bibitem{Hormander1967}
L.~H\"ormander,
\emph{Hypoelliptic second order differential equations},
Acta Math. \textbf{119} (1967), 147--171.
% DOI: 10.1007/BF02392081.

\bibitem{LRMinkowski}
G.~F. Lawler and M.~A. Rezaei,
\emph{Minkowski content and natural parameterization for the Schramm--Loewner evolution},
Ann. Probab. \textbf{43} (2015), no.~3, 1082--1120.
% DOI: 10.1214/13-AOP874.

\bibitem{LSnatural}
G.~F. Lawler and S.~Sheffield,
\emph{A natural parametrization for the Schramm--Loewner evolution},
Ann. Probab. \textbf{39} (2011), no.~5, 1896--1937.
% DOI: 10.1214/10-AOP560.

\bibitem{LZnatural}
G.~F. Lawler and W.~Zhou,
\emph{SLE curves and natural parametrization},
Ann. Probab. \textbf{41} (2013), no.~3A, 1556--1584.
% DOI: 10.1214/12-AOP742.

\bibitem{OAIexact}
OpenAI,
\emph{An explicit exact Hausdorff gauge for SLE:
A regular formula from quantitative tails and dense visits},
preprint, September 26, 2026,
\href{https://github.com/openai/math/blob/main/preprints/An-explicit-exact-Hausdorff-gauge-for-SLE-September-26-2026/An-explicit-exact-Hausdorff-gauge-for-SLE-September-26-2026.pdf}
{OpenAI Math repository}.

\bibitem{RS}
S.~Rohde and O.~Schramm,
\emph{Basic properties of SLE},
Ann. of Math. (2) \textbf{161} (2005), no.~2, 883--924.

\end{thebibliography}
\end{document}